\title{\LARGE \bf
On the mean-field Belavkin filtering  equation}
\author{Sofiane Chalal$^{1}$,  Nina H. Amini$^{1}$, Gaoyue Guo $^{2}$ 
\thanks{This work is supported by the Agence Nationale de la Recherche projects Q-COAST ANR- 19-CE48-0003 and IGNITION ANR-21-CE47- 0015. }
\thanks{$^{1}$ CNRS, L2S, CentraleSupélec, Université Paris-Saclay.
        {\tt\tiny firstname.lastname@centralesupelec.fr}.}%
\thanks{$^{2}$ MICS, CentraleSupélec, Université Paris-Saclay.
        {\tt\tiny firstname.lastname@centralesupelec.fr}.}%
}
\newtheorem{theorem}{Theorem}
\newtheorem{lemma}{Lemma}
\newtheorem{remark}{Remark}
\newcommand{\iu}{\mathrm{i}\mkern1mu} 
\newcommand{\uc}{\boldsymbol{\mathrm{u}}} 
\def \C{\mathbb{C}}
\def \E{\mathbb{E}}
\def \H{\mathbb{H}}
\def \N{\mathbb{N}}
\def \R{\mathbb{R}}
\def\Uc{{\cal U}}
\def\Xc{{\cal X}}
\def \d {\delta}
\def \hm {\hat{\mu}}
\def \d{{\rm d}}
\def \1{\mathds{1}}
\begin{document}

\maketitle
\thispagestyle{empty}
\pagestyle{empty}

\begin{abstract}
Following Kolokoltsov's work \cite{kolokoltsov2022qmfg}, we present an extension of mean-field control theory in quantum framework. In particular such an extension is done naturally by considering the Belavkin quantum filtering and control theory in a mean-field setting. In this setting, the dynamics is described by a controlled Belavkin equation of McKean-Vlasov type. We prove the well-posedness of such an equation under imperfect measurement records. Furthermore, we show under purification assumption the propagation of chaos for perfect measurements. Finally, we  apply particle methods to simulate the mean-field Belavkin equation and we provide numerical simulations showing the stabilization of the mean-field Belavkin equation by a feedback control strategy  towards a chosen target state.

\textit{Keywords : }Quantum filtering, Stochastic control, Mean-field Belavkin equation, Quantum state reduction, Stabilization in mean-field. 

\end{abstract}

\section{Introduction}

 Mean-field (MF) game  theory, lying at  the intersection of game theory and stochastic control theory, is the study of strategic decision made by interacting indistinguishable agents in very large populations. This class of problems was considered in the engineering literature by Huang,  Malhame and Caines \cite{huang2006large} and independently and around the same time by mathematicians Lasry and  Lions \cite{lasry2006mfg1}. Namely, consider $N$ agents whose states evolve according to the stochastic differential equations below: for $j=1,\ldots, N$, 
\begin{align*}
\mathrm{d}X^{\uc,j}_t &= b(X^{\uc,j}_t,u^{j}_t,\hm_t^{\uc, N})\mathrm{d}t + \sigma(X^{\uc,j}_t,u^{j}_t,\hm_t^{\uc, N})\mathrm{d}W_t^{j} \\
\hm_t^{\uc, N}&:=\frac{1}{N}\sum_{k=1}^{N}\delta_{X^{\uc,k}_t},\;\;
\hm^{\uc, N}=(\hm_t^{\uc, N})_{0\leq t\leq T},
\end{align*}
where $T$ is supposed as the final time, and $b, \sigma$ are suitable  functions and $W^1,\ldots, W^N$ are independent Brownian motions. Here $X^{\uc,j}_t$ stands for the state of agent $j$ at time $t$ subject to strategy profile $(u^1,\ldots, u^N)=:\uc$, and each agent interacts with the others through the empirical measure $\hm_t^{\uc, N}$. Provided some set $\Uc$ of admissible  strategies and a time horizon $T$, agent $j$ aims to minimize  its cost $\Uc\ni u\mapsto \mathcal{J}_j(u)\in\R$ with $\mathcal{J}_j(u)\equiv \mathcal{J}(u, \hm^{\uc_j,N})$:  
$$\mathcal{J}_j(u):= \mathbb{E}\left[\int_{0}^{T} f(X^{\uc_j,j}_t,u_t,\hm_t^{\uc_j,N})\d t + g(X_T^{\uc_j, j},\hm_T^{\uc_j,N})\right],$$
where $\uc_j:= (u^1,\ldots, u^{j-1}, u, u^{j+1},\ldots, u^N)$  and $f, g$ are some cost  functions. Nash equilibrium, where no player can do better by unilaterally changing their strategy,  is the most common way to define the solution of such a non-cooperative game. Namely, $\uc^*:=(u^{*,1},\ldots, u^{*,N})\in\Uc^{{N}}$ is said to achieve a Nash equilibrium if 
$$\mathcal{J}(u^{*,j},\hm^{\uc^{*},N})= \inf_{u\in\Uc} \mathcal{J}(u,\hm^{\uc^*_j,N}),$$ where $\uc_j^*:= (u^{*,1},\ldots, u^{*,j-1}, u, u^{*,j+1},\ldots, u^{*,N}).$ 
Generally there is no explicit expression for the Nash equilibrium, and its numerical computation is quite costly. Given  the importance for applications, as well as its active theoretical interest, it becomes increasingly important to consider the MF limit as $N\to\infty$. Hence, the corresponding MF game consists of finding a pair  $(\hat u, \hat X)$ satisfying
\begin{align*}
\mathrm{d}\hat{X}_t = b(\hat{X}_t,\hat u_t,& \mathcal{L}(\hat{X}_t))\mathrm{d}t + \sigma(\hat{X}_t,\hat u_t, \mathcal{L}(\hat{X}_t))\mathrm{d}W_t,\\
&\mathcal{J}(\hat u,\hm) \leq \mathcal{J}(u,\hm),
\end{align*}
where $\hm:=(\mathcal{L}(\hat X_t))_{0\leq t\leq T},$ and $\mathcal{L}$ denotes the law of random variable $\hat{X}_t$.

The empirical measure playing a major role in classical MF game, does not have an analogue in quantum setting, since $N-$particle quantum evolution particles are not separated in individual dynamics due to entanglement between particles. The other difficulty is related to quantum measurements which perturb the state of the system, which is known as a back-action effect. Moreover, measuring continuously freezes the  dynamics of the system \cite{sudarshan77}. Hence a new methodology is required to build a quantum MF game theory.

In a remarkable series of papers \cite{kolokoltsov2021law,kolokoltsov2022dynamic,kolokoltsov2022qmfg,kolokoltsov2021qmfgcounting},  Kolokoltsov has developed a  methodology for quantum MF games, where indirect measurements are considered to conserve the system's dynamics. In this framework, the dynamics is described by matrix-valued stochastic differential equations. As same as for the classical case, the propagation of chaos has been derived by adopting the approach of Pickl \cite{pickl11simple} to a stochastic version. 
It should also be noted that this new framework allows us to deal with a measurement-based feedback control problem of quantum systems with high dimensionality. Feedback control of  quantum systems plays a major role in controlling quantum systems in a robust fashion, see e.g.,  \cite{serafini12feedback,gough13,handel05rev,wiseman2009quantum}. Due to high dimensionality of the system, realization of a feedback control in real-time is time-consuming and not practical in a real experiment. 


In the following, we recall Belavkin quantum filtering theory \cite{boutenhandel07,gough22,ohki18,belavkin01QuantumNB} and we discuss the extension of  classical MF games and control characteristics in the quantum filtering framework, which is proposed in \cite{kolokoltsov2022qmfg}. The quantum filtering framework represents a natural one to construct a quantum MF game theory.
Later, motivated by games with incomplete information, we extend the MF Belavkin equation in the case of imperfect measurement records, and give a proof of the well-posedness of such the equation. Furthermore, for perfect measurements, we show the propagation of chaos under the purification assumption, i.e., asymptotically the mixed states become pure states, see e.g., \cite{maassen2006purification}.
Finally,  we use particle methods algorithm to simulate the MF equation.  We suggest the use of quantum MF filtering as a method to reduce the complexity representation of open quantum systems, which is usually high. In the case of quantum non-demolition measurement,  simulations illustrate a quantum state reduction (see e.g., \cite{handel05red,bauer2013repeated,bauer2011convergence,liang2019exponential}) for MF particles. This is encouraging to apply such a method, for instance, in feedback stabilization based on such an MF theory. Inspired by \cite{liang2018exponential}, we construct a control law depending on the MF equation, through simulations, we observe stabilization of the system toward the target state.
\subsection{Preliminaries}
We fix throughout the paper a finite set $\Xc = \{1,\dots,d\}$ and set $\mathbb{H} := \C^{d}.$
Let $M_d$ be the set of $d\times d$ complex matrices. For every $A\in M_d$, denote by $A^\dag$ its conjugate transpose. Define further the set of density matrices ${S}_{d}:=\{ \rho \in {M}_{d} :~ \rho = \rho^{\dag}, ~ \rho \geq 0,~   
 tr(\rho) = 1 \}$. For any   $A,B\in M_{d}$, set $[A,B] := AB - BA$  and $\{A,B\} := AB + BA$. For every $N\in\N$, let $\mathbb{H}^{\otimes {N}}$ denote the $N-$tensor product  of $\H$. 

For any operator $B:\H\to\H$ and for $j=1,\ldots,N,$ denote by ${\bf B}_j :\mathbb{H}^{\otimes {N}} \to\mathbb{H}^{\otimes {N}}$ the operator acting only on the sub-system living on the $j$-th Hilbert space $\H$, i.e. ${\bf B}_j(h_1\otimes\cdots\otimes h_j\otimes\cdots\otimes h_N):=(h_1\otimes\cdots \otimes B(h_j)\otimes\cdots\otimes h_N)$. Similarly for any operator $O:\H\otimes\H\to\H\otimes\H$, i.e. $O(\cdot\otimes\cdot):=(O_1(\cdot\otimes\cdot)\otimes O_2(\cdot\otimes\cdot))$, and for $j\neq k\in \{1,\ldots,N\}$ denote by ${\bf O}_{jk} :\mathbb{H}^{\otimes {N}} \to\mathbb{H}^{\otimes {N}}$ the operator acting only on the sub-systems living on the product of $j$-th and $k$-th Hilbert spaces $\H$, i.e. ${\bf O}_{jk}(h_1\otimes\cdots\otimes h_j\otimes\cdots\otimes h_k\otimes\cdots\otimes h_N):=(h_1\otimes\cdots\otimes O_1(h_j\otimes h_k)\otimes\cdots\otimes O_2(h_j\otimes h_k)\otimes\cdots\otimes h_N)$. 


\section{Quantum filtering and control}
Having examined the characteristics in the classical case, we want to extend them to the case where particles obey the principles of quantum mechanics.

In a dynamic game situation with $N$-players, the strategies are made in real-time, and therefore the system must be measured continuously.  The quantum system to consider is therefore necessarily open, in order to observe the evolution of the state and to avoid quantum Zenon effect, we have to pass through indirect measurements \cite[Section 4                 
]{belavkin92}. The control induced by each player is done via a controlled electromagnetic field.


An open quantum system undergoing continuous-time measurements 
can be described mathematically by a matrix-valued stochastic differential equation called Belavkin quantum filtering equation
\begin{align*}
\mathrm{d}\rho_t&= \left(-\iu[{H} + u(\rho_t)\hat{H},\rho_t] + \big( L\rho_tL^{\dag} - \frac{1}{2}\big\{L^{\dag}L,\rho_t\big\}\right)\mathrm{d}t \\
+& \sqrt{\eta}\left(L\rho_t + \rho_t L^{\dag} - tr\big((L + L^{\dag})\rho_t\big)\rho_t\right)\mathrm{d}W_t.
\end{align*}
Here ${H}$ and $\hat{H}$ represent respectively the free and controlled Hamiltonian matrices. The matrix $L$ is the measurement operator associated to the probe.
The observation process of the probe $Y$ is a continuous semimartingale with $\mathrm{d}Y_t = \mathrm{d}W_t + \sqrt{\eta}\,tr\big((L+ L^{\dag})\rho_t\big)\mathrm{d}t,$ where $W$ is a classical Wiener process.  Here $u $ denotes the feedback controller adapted to $\mathcal{F}^{Y_t}$ and
$\eta \in (0,1]$ represents the efficiency of the detector.

\begin{remark}
In the absence of control input and measurement, the dynamics is described by a deterministic linear master equation, called Lindblad master equation.
\end{remark}

\section{$N$-quantum 
particle system and mean-field limit}
\subsection{Belavkin equation for a controlled $N$-particle system}

 Now we consider a system of $N$-quantum particles, where each particle interacts with the  others through an interaction Hamiltonian denoted by ${A}$. Each particle is measured indirectly through an appropriate observable, i.e., $N$-quantum channels are considered. To each particle, a feedback control is applied to achieve certain goals like minimizing a  cost function, maximizing a reward, stabilizing the system, etc. Under our setting, 
${A}$ is given as a symmetric self-adjoint integral operator with Hilbert-Schmidt kernel, i.e. ${A}:\Xc^4\to\C$ is so that 
${A}(l,l';k,k') = {A}(l',l;k',k)$,  
${A}(l,l';k,k') = \overline{{{A}(l,l';k,k')}}.$ 
\begin{align*}
&A: L^2(\Xc^2) \to L^2(\Xc^2).\\
&Af(l,l') := \sum_{(v,v') \in \Xc^2} A(l,l';v,v')f(v,v').
\end{align*}   
By setting $O:=A$ as in Preliminaries, we define similarly $\mathbf{A}_{jk}$.




Hence, the dynamics of the system, identified by the density matrix $\boldsymbol{\rho}^N$, satisfies the Belavkin equation
\begin{align}
\mathrm{d}\boldsymbol{\rho}_t^{N} &\!\!=\! -\iu[\mathbf{H},\boldsymbol{\rho}_t^{N}]\mathrm{d}t + \sum_{j=1}^N \left({\bf L}_j\boldsymbol{\rho}_t^{N}{\bf L}_j^{\dag} - \frac{1}{2}\{{\bf L}_j^{\dag}{\bf L}_j,\boldsymbol{\rho}_t^{N}\}\right)\mathrm{d}t \nonumber \\
&\!\!\!\!\!\!\!+\!\sqrt{\eta}\sum_{j=1}^N\left(\boldsymbol{\rho}_t^{N}{\bf L}^\dag_j+{\bf L}_j\boldsymbol{\rho}_t^{N} - \mathrm{tr}\left(({\bf L}_j+{\bf L}_j^{\dag})\boldsymbol\rho_t^{N}\right)\boldsymbol\rho_t^{N}\right)\mathrm{d}W_t^{j},
\label{eq:particle}
\end{align}
where $\boldsymbol{\rho}_0^{N}=\rho_0^{\otimes N}$, $\mathbf{H}:=\sum_{j} (\mathbf{H}_{j} + u(\rho_t^{j})\mathbf{\hat{H}}_j )+ \sum_{i<j}\mathbf{A}_{ij}/N$, where $\rho_t^{j}$ represents the state of the particle $j$ (for $j=1,\cdots,N$), which can be obtained by taking a partial trace over the other particles.  The corresponding observation process $Y^j$ for particle $j$ is given by
$$\mathrm{d}Y_t^{j} = \mathrm{d}W_t^{j} + \sqrt{\eta}tr\big( ({\bf L}_j + {\bf L}_j^{\dag})\rho_t^{j}\big)\mathrm{d}t.$$
Here we note that  Equation \eqref{eq:particle} is well posed by using similar arguments as in \cite[Propositions 3.3 and 3.5]{mirrahimiHandel07}.
\subsection{Mean-field limit }

As in classical case, we expect that for an appropriate interaction Hamiltonian, when $N$ is large, each particle interacts with an MF instead of interacting individually with the others, and a typical behavior for particles emerges.
For a closed quantum system described by the Schr\"odinger equation, the MF dynamics is given by the well-known Schr\"odinger-Hartee equation \cite{lewin2014derivation,pickl11simple}, and Lindblad-Hartee equation for open quantum systems \cite{merkil2012}. Later, this equation is extended by Kolokoltsov \cite{kolokoltsov2021law,kolokoltsov2022qmfg,kolokoltsov2021qmfgcounting} to treat the case of an open quantum system undergoing continuous-time measurements.

In the following, we consider the later treatment  and we assume in addition that measurements are not perfect, inspired by the previous treatment, we recover formally the following Belavkin equation of MF type
\begin{align}
\mathrm{d}\gamma_t&=  
(-\iu[ {H} + u(\gamma_t){\hat{H}} + {A}^{{m}_t}, \gamma_t] )\mathrm{d}t\nonumber\\
&+ \left(L\gamma_tL^{\dag} - \frac{1}{2}
\{L^{\dag}L,\gamma_t\}\right)\mathrm{d}t\nonumber\\ 
&+\sqrt{\eta}\Big(\gamma_tL^{\dag} + L\gamma_t - tr\big((L + L^{\dag})\gamma_t\big)\gamma_t\Big)\mathrm{d}W_t,
\label{mfbn}
\end{align}
where  $m_t := \mathbb{E}[\gamma_t]$, $\gamma_0 =\rho_0 \in {S}_{d},$ and $A^m=\sum_{\Xc^2}A(l,l';k,k')\overline{m(k,k')}.$
\begin{remark}
   In the absence of control, by taking  expectation, a new nonlinear equation of Lindblad version  can be obtained as follows
\begin{align*}
    \mathrm{d}{m}_t = &-\iu[ {H} + {A}^{m_t} , {m}_t]\mathrm{d}t + \Big(L{m}_tL^{\dag} - \frac{1}{2}\{LL^{\dag},m_t\}\Big)\mathrm{d}t.
\end{align*} 
\end{remark}
\section{Main result }
To justify the above approximation in the MF limit, we have to show that $\boldsymbol\rho^N$ asymptotically becomes close to $\gamma^{\otimes {N}}.$ To measure a deviation from $\boldsymbol\rho^N$ to $\gamma^{\otimes {N}},$  we take  the following quantity considered by Pickl in \cite{pickl11simple}
\begin{align}
\alpha_{N,j}(t) = 1 - tr\big(\gamma_t{\rho}_t^j\big)
= 1 - tr(\boldsymbol\gamma^j_t\boldsymbol\rho^N_t\big),
\label{deviationf}
\end{align}
which is calculated only for the particle $j$ and we recall that  
${\rho}_t^{j}$ corresponds to the partial trace of $\boldsymbol\rho^N$ with respect to the particles other than the particle $j$. Here ${\boldsymbol\gamma^j}:=I\otimes\cdots\otimes \gamma\otimes\cdots\otimes I,$  where $j$-th component of ${\boldsymbol\gamma^j}$ is identified with $\gamma$ and the other components are all identity operator $I.$ 

For the sake of simplicity, we denote $\alpha_N := \alpha_{N,j}$ for any fixed $j.$ By an inequality obtained in \cite[Proposition A.1]{kolokoltsov2022qmfg}, it is sufficient to control $\mathbb{E}[\alpha_N(t)]$ by $\alpha_N(0)$ to state a propagation of chaos result.

In the following theorem, we state the main result of this paper concerning the well-posedness of Equation \eqref{mfbn} (existence and uniqueness of the solution) and propagation of chaos.

\begin{theorem}[well-posedness and propagation of chaos]
Let $T > 0,\; U > 0$, and let $u: {S}_{d} \to [-U,U]$ be bounded and Lipschitz, i.e. 
$|u(\rho) - u(\rho') | \leq \kappa\, \|\rho-\rho'\|$, with $\kappa > 0.$
Then \eqref{mfbn} is well posed and valued in ${S}_{d}$.

Furthermore for $\eta = 1$,  there exists a constant $c \equiv c( ||{A}|| ,||{\hat{H}}||, ||L||)$ such that
\begin{align*} \mathbb{E}\big[ \alpha_{N}(t) \big] &\leq e^{ct}\left(\alpha_N(0)+\frac{1}{\sqrt{N}} \right),
\end{align*}
where $||\cdot||$ corresponds to any matrix norm. In particular, the propagation of chaos is verified under purification assumption.
\label{thm:chaos}
\end{theorem}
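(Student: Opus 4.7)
The plan is to treat the two parts of the theorem sequentially: first the McKean--Vlasov well-posedness of~\eqref{mfbn}, then the quantitative chaos bound on $\mathbb{E}[\alpha_N(t)]$.

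For well-posedness I would run a Picard-type fixed-point argument on the curve of marginals. Given a continuous candidate curve $m:[0,T]\to S_d$, I would freeze the interaction by replacing $A^{m_t}$ by this prescribed matrix; equation~\eqref{mfbn} then reduces to a one-particle Belavkin SDE whose coefficients, restricted to $S_d$, are globally Lipschitz in $\gamma$ (the feedback $u$ is bounded and Lipschitz by assumption, and the normalization factor $\mathrm{tr}((L+L^{\dag})\gamma)\gamma$ is smooth on the compact set $S_d$). Strong existence, uniqueness, and $S_d$-preservation then follow by the standard linearization trick: solve first an unnormalized positive linear Belavkin SDE for $\sigma_t$, and set $\gamma^m_t:=\sigma_t/\mathrm{tr}(\sigma_t)$, cf.\ \cite[Propositions 3.3 and 3.5]{mirrahimiHandel07}. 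Defining $\Phi(m)_t:=\mathbb{E}[\gamma^m_t]$, an Itô + Grönwall estimate gives $\sup_{s\le t}\mathbb{E}\|\gamma^m_s-\gamma^{m'}_s\|^2\le C_T\int_0^t\|m_s-m'_s\|^2\,\mathrm{d}s$, so that $\Phi$ is a contraction on a small time interval; iterating extends the unique fixed point to $[0,T]$.

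For the chaos bound I would apply Itô's product rule to $\mathrm{tr}(\boldsymbol{\gamma}^j_t\boldsymbol{\rho}^N_t)$, feeding in \eqref{mfbn} for $\boldsymbol{\gamma}^j$ (the $j$-th slot embedding of $\gamma_t$) and \eqref{eq:particle} for $\boldsymbol{\rho}^N$. The resulting increments split naturally into four groups: (i) the free Hamiltonian and single-site Lindblad pieces cancel pairwise up to an $O(\alpha_N)$ remainder; (ii) the feedback contribution is $O(\alpha_N)$ thanks to the Lipschitz bound on $u$ together with the elementary estimate $\|\rho^j_t-\gamma_t\|\le C\alpha_N(t)^{1/2}$; (iii) the mismatch between the symmetric empirical interaction $\frac1N\sum_{k\neq j}\mathbf{A}_{jk}$ in \eqref{eq:particle} and the mean-field $A^{m_t}$ in \eqref{mfbn} yields an $O(1/\sqrt N)$ fluctuation by a quantum law-of-large-numbers estimate on the exchangeable state $\boldsymbol{\rho}^N$, compatible with \cite[Proposition A.1]{kolokoltsov2022qmfg}; (iv) the Itô cross-variation of the diffusion terms. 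After taking expectation to kill the martingale parts one obtains
\begin{equation*}
\tfrac{\mathrm{d}}{\mathrm{d}t}\mathbb{E}[\alpha_N(t)]\;\le\;c\,\mathbb{E}[\alpha_N(t)]+\tfrac{c}{\sqrt N},
\end{equation*}
and Grönwall's lemma closes the bound.

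The main obstacle lies in step (iv). Unlike the classical McKean--Vlasov setting, the Belavkin noise is quadratic in $\gamma$ because of the normalization $\mathrm{tr}((L+L^{\dag})\gamma)\gamma$, and the cross-variation between the single-particle noise and the $j$-th component of the $N$-particle noise does not vanish for generic mixed states. The hypothesis $\eta=1$ and the purification assumption (ultimately forcing $\gamma_t^2=\gamma_t$, see \cite{maassen2006purification}) are precisely what collapse these residual quadratic cross-terms so that they can be reabsorbed into $c\,\mathbb{E}[\alpha_N(t)]$. Carrying out this cancellation explicitly, and pairing it cleanly with the symmetrization estimate used in (iii), will be the technical heart of the proof.
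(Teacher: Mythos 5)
Your well-posedness argument is essentially the paper's: freeze the mean-field term along a prescribed curve in $C([0,T],S_d)$, solve the resulting one-particle Belavkin SDE via \cite{mirrahimiHandel07}, and run a fixed point on the map $m\mapsto(\mathbb{E}[\gamma^m_t])_t$. (The paper iterates the map to get a bound of order $C^k t^{k-1}/(k-1)!$ and a global contraction of the $k$-fold composition on all of $[0,T]$, rather than a small-time contraction plus patching, but that is a cosmetic difference.)

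The genuine gap is in the chaos estimate, at exactly the step you flag as the ``technical heart.'' You propose to dispose of the quadratic Lindblad/noise cross-variation terms by invoking $\eta=1$ together with purity of $\gamma_t$, i.e.\ $\gamma_t^2=\gamma_t$. This does not close the argument: purification in the sense of \cite{maassen2006purification} is an \emph{asymptotic} property of the filter, so for a mixed initial state $\gamma_t$ is generically mixed at every finite $t$, and you cannot feed $\gamma_t^2=\gamma_t$ into the Gr\"onwall differential inequality. Moreover the theorem asserts the bound $\mathbb{E}[\alpha_N(t)]\le e^{ct}(\alpha_N(0)+N^{-1/2})$ unconditionally for $\eta=1$; the purification assumption enters only afterwards, to convert smallness of $\alpha_N$ into closeness of reduced states via \cite[Proposition A.1]{kolokoltsov2022qmfg}. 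The paper's actual resolution of your step (iv) is a purely algebraic inequality (its Lemma~1): for arbitrary $A,B\in S_d$ and Hermitian $L$,
\begin{align*}
\Big|tr(LALB)-\tfrac12\,tr\big(B(LA+AL)\big)\,tr(BL+AL)+tr(BA)\,tr(BL)\,tr(AL)\Big|\;\le\;18\,\|L\|^2\,tr\big((I-A)B\big),
\end{align*}
proved by diagonalizing $A$ and exploiting the positivity bounds $|B_{jr}|\le\alpha_k$ and $|B_{jk}|\le\sqrt{\alpha_k}$. This is precisely the generalization of \cite[Inequality (43) of Lemma 1]{kolokoltsov2021law} to non-projector $\gamma$, and it is what yields $|P^{(2)}_t|\le C\|L\|^2\alpha_N(t)$ with no purity assumption along the trajectory. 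Your step (ii) has the same difficulty in miniature: the estimate $\|\rho^j_t-\gamma_t\|\le C\,\alpha_N(t)^{1/2}$ fails for mixed $\gamma_t$ (take $\rho^j_t=\gamma_t=I/d$, where $\alpha_N=1-1/d$ is not small although the distance vanishes), so the feedback term must instead be controlled through the commutator structure of $P^{(1)}$ as in the proof of \cite[Theorem 3.1]{kolokoltsov2022qmfg}. Without a replacement for Lemma~1, your scheme only proves the bound under a pointwise-in-time purity hypothesis that the dynamics does not provide.
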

\begin{proof}
\paragraph{Well-posedness} The proof will be a combination of arguments in \cite{mirrahimiHandel07} and  \cite[Pages 235-237]{carmona2018mfg1}. For each $\xi\in C\big([0,T], {S}_{d}\big),$ we consider the following equation 
\begin{align}\label{NBol}
    \mathrm{d}{\gamma}_t^{\xi} &= -\iu[ {F}_t^\xi ,{\gamma}_t^{\xi}]\mathrm{d}t + \Big(L{\gamma}_t^{\xi}L^{\dag} - \frac{1}{2}\big\{L^{\dag}L,{\gamma}_t^{\xi}\big\}\Big)\mathrm{d}t\nonumber\\ &+ \sqrt{\eta}\Big(\gamma_t^{\xi}L^{\dag} + L\gamma_t^{\xi} - tr\big((L + L^{\dag})\gamma_t^{\xi}\big)\gamma_t^{\xi}\Big)\mathrm{d}W^{}_t,
\end{align}
where $F_t^\xi:={H} + u(\gamma_t^\xi){\hat{H}} + {A}^{{\xi_t}}.$ This is well posed by similar arguments applied in \cite[Propositions 3.3 and 3.5]{mirrahimiHandel07}. From the existence of the family of equations parametrized by $\xi$, we define the following mapping $\Xi:C\big([0,T], {S}_{d}\big)\to C\big([0,T], {S}_{d}\big)$ by $\Xi(\xi):=(\mathbb{E}[\gamma_t^{\xi}])_{0 \leq t \leq T}$.
Therefore the process $\gamma^m$ corresponds to the  solution of \eqref{mfbn} if and only if $ m=\Xi({m})$. So we should prove the existence and uniqueness  by showing  that the mapping $\Xi$ has a unique fixed point.

To show this, we prove that the map $\Xi$ is a contraction with respect to the uniform norm on $C\big([0,T], {S}_{d}\big).$ Let us pick two arbitrary elements $\xi^1$ and $\xi^2$ in $C\big([0,T], {S}_{d}\big).$ Set $\Delta\gamma_t := \gamma_t^{\xi^1} - \gamma_t^{\xi^2},$
     $\Delta{\xi}_t := \xi^1_t-\xi^2_t,$ and 
$K:=L + L^{\dag}.$
    Then it follows that 
{\small{\begin{align*}
    &\Delta\gamma_t = \int_{0}^{t}\Bigg(-\iu[ {F}^{\xi^1}_s , \Delta \gamma_s] + \Big(L\Delta \gamma_sL^{\dag} - \frac{1}{2}\big\{L^{\dag}L,\Delta\gamma_s\big\}\Big)\Bigg)\mathrm{d}s\\ &+\sqrt{\eta}\int_{0}^{t}\Big(\Delta\gamma_sL^{\dag} + L\Delta\gamma_s\Big)\mathrm{d}W_s -\int_{0}^{t}\big(\iu[{F}^{\xi^{1}}_s-{F}^{\xi^{2}}_s,\gamma_s^{\xi^2}] \big)\mathrm{d}s\\
    &-\sqrt{\eta}\int_{0}^{t}\Big(tr\big(K\gamma_s^{\xi^1}\big)\gamma_s^{\xi^1} - tr\big(K\gamma_s^{\xi^2}\big)\gamma_s^{\xi^2}\Big)\mathrm{d}W_s,
\end{align*}}}%
which yields to the following by H\"older inequality and Itô's isometry
\begin{align*}
   &\|\Xi(\xi^1)_t-  \Xi(\xi^2)_t\| \le  \mathbb{E}\big[\|\Delta\gamma_t\|\big] \\ &\leq\int_{0}^{t}\E\left[\big\|[ {F}^{\xi^1}_s , \Delta \gamma_s]\big\| + \Big\|L\Delta \gamma_sL^{\dag}\Big\| + \frac{1}{2}\Big\|\big\{L^{\dag}L,\Delta\gamma_s\big\}\Big\|\right]\mathrm{d}s\\  
   &+\int_{0}^{t}\E\Big[\Big\|[{F}^{\xi^{1}}_s-{F}^{\xi^{1}}_s,\gamma_s^{\xi^2}]\Big\| \Big]\mathrm{d}s \\
   &+\int_{0}^{t}\left(\E\Big[\Big \|\Delta\gamma_sL^{\dag} + L\Delta\gamma_s\Big\|^2\Big]\right)^{1/2}\mathrm{d}s\\
   & +\int_{0}^{t}\E\Big[\Big \|tr\big(K\gamma_s^{\xi^1}\big)\gamma_s^{\xi^1} - (tr\big(K\gamma_s^{\xi^2}\big)\gamma_s^{\xi^2}\Big \|^2\Big]^{1/2}\mathrm{d}s\\
   &\le \int_{0}^{t} C\mathbb{E}\big[\|\Delta\gamma_s\|+\|\Delta\xi_s\|\big] \mathrm{d}s,  
   \end{align*}
   where  $C>0$ is some constant depending on $T, ||{H}||, ||{A}||, \kappa, \eta, ||{\hat{H}}||, ||L||$. 
In view of Gronwall's inequality, one concludes the existence of some constant, still denoted by $C$ without any danger of confusion
$$\max_{0\le r\le t}\|\Xi(\xi^1)_r-  \Xi(\xi^2)_r\|\le C\int_{0}^{t}\|\Delta \xi_s\|{\mathrm{d}s},\quad \forall t\le T.$$

Replacing $\xi^i$ by $\Xi(\xi^i)$ for $i=1,2$, it follows that  
\begin{align*}\|\Xi^{(2)}(\xi^1)_t - \Xi^{(2)}(\xi^2)_t\| &\leq C^2  \int_{0}^{t}\left(\int_{0}^{s} \|\Delta \xi_r\|\mathrm{d}r\right) \mathrm{d}s\\
&\leq C^2t^2\max_{0\le r\le t}\|\Delta \xi_r\|.
\end{align*}
Repeating the above reasoning, one has for any $k\ge1 $ 
$$\|\Xi^{(k)}(\xi^1)_t - \Xi^{(k)}(\xi^2)_t\| \leq  \frac{C^kt^{k-1}}{(k-1)!}\max_{0\le r\le t}\|\Delta \xi_r\|,$$
where $\Xi^{(k)}$ denotes the $k-$composition of $\Xi$. So for $k$ large enough $\Xi^{(k)}$ is a  contraction. To show the  uniqueness, it is sufficient to pick two  arbitrary solutions $m^{1}, m^{2}.$ We have
\begin{align*}
    \|m^{1}_t - m^{2}_t\| = \|\Xi^{(k)}(m^{1})_t - \Xi^{(k)}(m^{2})_t \|
    \leq c\|m^{1}_t - m^{2}_t\|,
\end{align*}
where $c<1$ denotes some constant. Hence $m^1=m^2.$
\paragraph{Propagation of chaos } 
Here our objective is to estimate   the mean of deviation defined in Equation \eqref{deviationf} by  an inequality depending on the deviation in initial time. 

In order to do this, it's sufficient to estimate this quantity for one of the particles, for instance here we consider the $j$-th particle. For the sake of simplicity, we adapt our notations as follows: 
$\boldsymbol{\gamma} := \boldsymbol{\gamma}^{j}$, $\mathbf{L} := \mathbf{L}_j.$ By  Itô's formula,  we get 
    \begin{align*} \mathrm{d}\alpha_N^{}(t) &= -tr\big(\mathrm{d}\boldsymbol\rho_t^{N}\boldsymbol\gamma_t^{}\big) -tr\big(\boldsymbol{\rho}_t^{N}\mathrm{d}\boldsymbol\gamma_t^{}\big) - tr\big(\mathrm{d}\boldsymbol\rho_t^{N}\mathrm{d}\boldsymbol\gamma_t^{}\big).\\  &= \big(P_t^{(1)} + 
     P_t^{(2)})\mathrm{d}t + \sum_k P_t^{(3,k)}\mathrm{d}W_t^{k},
    \end{align*} 
    \vspace{-6mm}
    
where,
{\small\begin{align*}
P_t^{(1)} &=
\iu tr\Big(\big[\frac{1}{N}\sum_{k \neq j}\mathbf{A}_{kj} - \mathbf{A}_j^{m_t} + \big(u(\rho_t^{j}) - u(\gamma_t)\big)\mathbf{\hat{H}}, \mathbf{I} - \boldsymbol{\gamma}_t\big]\boldsymbol\rho_t^{N}\Big)\\
P_t^{(2)} &\!=\! - tr\big(\boldsymbol{\gamma}_t \mathbf{L}\boldsymbol\rho^N_t\mathbf{L}^{\dag} \!+ \boldsymbol{\gamma}_t \mathbf{L}^{\dag}\boldsymbol{\rho}^N_t \mathbf{L} + \boldsymbol{\gamma}_t \mathbf{L}^{\dag}\boldsymbol{\rho}^{N}_t\mathbf{L}^{\dag} + \boldsymbol{\gamma}_t \mathbf{L} + \boldsymbol{\rho}^{N}_t\mathbf{L}\big) \\ 
&+\Big[tr\big(\boldsymbol{\gamma}_t\boldsymbol\rho^N_t\mathbf{L}^{\dag} + \boldsymbol{\gamma}_t \mathbf{L}\boldsymbol\rho^N_t\big)tr\big(\boldsymbol{\gamma}_t(\mathbf{L}^{\dag}+\mathbf{L})\big) + \\
&tr\big(\boldsymbol{\gamma}_t\boldsymbol\rho^N_t\mathbf{L}^{\dag} + \boldsymbol{\gamma}_t L\boldsymbol\rho^N_t\big)tr\big(\boldsymbol\rho^N_t(\mathbf{L}^{\dag}+\mathbf{L})\big) - \\
&tr\big(\boldsymbol\rho^N_t\boldsymbol{\gamma}_t\big)tr\big(\boldsymbol\rho^N_t(\mathbf{L}^{\dag} + \mathbf{L}\big)\Big)tr\big(\boldsymbol{\gamma}_t(\mathbf{L}+\mathbf{L}^{\dag})\big)\Big],
\end{align*}}%
and $P_t^{(3,k)}$ are bounded quantities. By taking an expectation of the above equation, it follows from the proof of \cite[Theorem 3.1]{kolokoltsov2022qmfg} that there exists $C>0$ such that
{\small\begin{align*}
\frac{\d\E[\alpha_N(t)]}{\d t}
&=\mathbb{E}\big[|P_t^{(1)}|\big]+\mathbb{E}\big[|P_t^{(2)}|\big]  \\
&\le \big(C ||{A}||+\kappa||{\hat{H}}||\big)\mathbb{E}[\alpha_N(t)] + \frac{C}{\sqrt{N}} +\mathbb{E}\big[|P_t^{(2)}|\big].
\end{align*}}%
As for $P^{(2)}_t,$ we combine Lemma \ref{lem1} and \cite[Inequality (44) of Lemma 1]{kolokoltsov2021law}, and obtain $|P_t^{(2)}| \leq C'||L||^2\alpha_N(t)$ for some $C'>0$. Therefore, the proof is fulfilled by Gronwall inequality. 
\end{proof}
It remains to prove Lemma \ref{lem1} which 
 generalizes \cite[Inequality (43) of Lemma 1]{kolokoltsov2021law} and proves 
 \eqref{inemt} without assuming that  $\gamma$ is a one-dimensional projector. 
\begin{lemma}\label{lem1}
Let $A, B$ be in $S_d,$ and $L$ be a hermitian matrix. Then 
\begin{align} &\Bigg|tr(L A LB) - \frac{1}{2}tr(B(L A + A L))tr(BL + A L) \label{inemt}\\
&\!\!\!\!+ tr(B A)tr(BL)tr(A L) \Bigg| \leq 18||L||^2tr\big((I-A)B\big).\nonumber
\end{align}
\end{lemma}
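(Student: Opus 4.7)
My plan is to establish \eqref{inemt} by exploiting a gauge invariance of the left-hand side, reducing to a cleaner bilinear inequality, and then verifying the reduced bound via spectral expansion in the eigenbasis of $A$.

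The first step is to observe that the left-hand side of \eqref{inemt} is invariant under the shift $L \mapsto L + cI$ for any real $c$. Indeed, using $tr(A) = tr(B) = 1$ one checks directly that the coefficient of $c^2$ is $tr(AB) - 2\,tr(AB) + tr(AB) = 0$ (from the three terms respectively), while the coefficient of $c$ vanishes thanks to the identity $tr((LA+AL)B) = tr(LAB) + tr(ALB)$ and cyclicity of the trace. Choosing $c = -tr(AL)$ we may therefore assume, at the price of replacing $\|L\|$ by $\|L - tr(AL)\,I\| \leq 2\|L\|$, that $tr(AL) = 0$.

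Under this gauge, the left-hand side of \eqref{inemt} collapses to $tr(LALB) - \mathrm{Re}(tr(ABL))\, tr(BL)$, since $tr((A+B)L) = tr(BL)$ and the triple-product term vanishes. It then suffices to bound this simplified expression by a constant (namely $18/4$) times $\|L\|^2\,\alpha$, where $\alpha := tr((I-A)B)$. Diagonalizing $A = \sum_i \lambda_i |e_i\rangle\langle e_i|$ and denoting $b_{ij} := \langle e_i|B|e_j\rangle$, $l_{ij} := \langle e_i|L|e_j\rangle$, the reduced inequality becomes a polynomial identity in these matrix entries, subject to the constraints $\sum_i \lambda_i = \sum_i b_{ii} = 1$, $\sum_i \lambda_i l_{ii} = 0$, and $\alpha = \sum_i (1-\lambda_i) b_{ii}$. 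Standard entrywise estimates $|l_{ij}| \leq \|L\|$ and $\sum_j |b_{ij}|^2 \leq b_{ii}$ (the latter following from $B^2 \leq B$, a consequence of $0 \leq B \leq I$) then let us bound each surviving monomial by $\|L\|^2 \alpha$.

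The main obstacle is precisely this last point: producing a linear-in-$\alpha$ bound rather than $\sqrt{\alpha}$, which is what a naive Cauchy-Schwarz would yield. The cancellation is most transparent at the extremal case $A = B = |e_1\rangle\langle e_1|$ (so $\lambda_1 = 1$): both $tr(LALB) = |l_{11}|^2$ and $\mathrm{Re}(tr(ABL))\,tr(BL) = l_{11}^2$ coincide, and the gauge condition $\sum_i \lambda_i l_{ii} = 0$ forces $l_{11} = 0$, so the combination vanishes identically. For general $A, B$ one tracks this cancellation by partitioning the index sum into the ``pure'' directions ($\lambda_i$ close to $1$) and the ``perturbative'' ones ($\lambda_i$ bounded away from $1$), pairing off-diagonal matrix elements $b_{ij}$ with weights of the form $(1-\lambda_j)$ hidden in the difference. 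The final constant $18$ is then recovered by accounting for the factor $4$ in $\|L\|^2$ introduced by the gauge reduction.
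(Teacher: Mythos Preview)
Your gauge-invariance observation is correct and genuinely elegant: the left-hand side of \eqref{inemt} is indeed unchanged under $L\mapsto L+cI$, and centering so that $tr(AL)=0$ does collapse the expression to $tr(LALB)-\mathrm{Re}\bigl(tr(ABL)\bigr)\,tr(BL)$. The paper does not exploit this symmetry at all; it works directly with the full three-term expression. So far your route is a real simplification.

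The gap is that from this point on you stop proving and start narrating. The heart of the lemma is precisely the passage from ``each monomial is bounded by $\|L\|^2$ times something'' to a bound that is \emph{linear} in $\alpha$, and you only gesture at it. Your proposed mechanism---partitioning indices into ``$\lambda_i$ close to $1$'' versus ``$\lambda_i$ bounded away from $1$''---has no specified threshold, no actual case analysis, and no accounting of constants; the sentence ``pairing off-diagonal matrix elements $b_{ij}$ with weights of the form $(1-\lambda_j)$ hidden in the difference'' is a description of what one hopes will happen, not a computation. Likewise the assertion that the constant comes out to $18/4$ after the gauge shift is not substantiated anywhere. Note also that your entrywise input $\sum_j|b_{ij}|^2\le b_{ii}$ (from $B^2\le B$) controls rows of $B$ by their diagonal, but $\alpha=\sum_i(1-\lambda_i)b_{ii}$ involves the \emph{complementary} weights $1-\lambda_i$; you have not explained how the two are linked.

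For comparison, the paper handles exactly this step by a different bookkeeping: it writes $\alpha=\sum_k A_{kk}\alpha_k$ with $\alpha_k:=1-B_{kk}$, records the positivity estimates $|B_{jr}|\le\alpha_k$ for $j,r\neq k$ and $|B_{jk}|\le\sqrt{\alpha_k}$ for $j\neq k$, and then expands the expression as a double sum over $(k,j)$ weighted by $A_{kk}A_{jj}$, bounding each block explicitly and exploiting an antisymmetry in $(k,j)$ to kill the dangerous $\sqrt{\alpha_k}$ contributions. That explicit term-by-term accounting is the actual content of the lemma, and it is what your proposal is missing. Your gauge reduction could well shorten that computation, but you still have to carry it out.
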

\begin{proof}
Without loss of generality, we consider a basis where $A$ is  diagonal so that we may rewrite $ A= \sum_{k}A_{kk} I_{kk},$ \text{where } $\sum_{k}A_{kk} = 1$ and $I_{kk}$ is the matrix whose only non-zero element is one on the $k$-th diagonal element. Hence,
\begin{align*}\alpha := tr\big( (I -A)B\big) &= tr\big( (I-B)A \big) = \sum_{k}A_{kk}\alpha_k,
\end{align*}
with $\alpha_k := tr( (I - A_{kk})B)$. By the positivity of $B$, it follows that (see \cite{kolokoltsov2021law} for further details) 
\begin{equation} |B_{jr}| \leq \alpha_k, \; j,r \neq k, \quad \max\big(|B_{jk}|,|B_{kj}|\big) \leq \sqrt{\alpha_k}, \; j \neq k 
\label{inl}\end{equation}
Rewrite the left hand side of Inequality \eqref{inemt} as follows:
{\small\begin{align*}
&\Bigg| \sum_{{k}}A_{{kk}}tr(LI_{{kk}}LB) +tr(BL)tr(B A)tr(A L) \\ &-\frac{1}{2}\sum_{{k}}A_{{kk}}tr\Big(B(LI_{{kk}} + I_{{kk}}L)\Big)tr\Big(BL + \sum_{{j}}A_{{jj}}I_{{jj}}L\Big)\Bigg|\\
&\leq \sum_{{k,j}}A_{{kk}}A_{{jj}}\Bigg|B_{{kk}}L_{{jj}}tr(BL) +(LBL)_{{kk}} \\
&- \frac{1}{2}\Big((BL)_{{kk}} + (LB)_{{kk}} \Big)\Big( tr(BL) + L_{{jj}} \Big) \Bigg|\\
&{\leq \sum_{{k}}5||L||^2A_{kk}\alpha_{k}}+\sum_{{k\neq j}}A_{{kk}}A_{{jj}}\Bigg|B_{{kk}}L_{{jj}}tr(BL) +(LBL)_{{kk}} \\
&- \frac{1}{2}\Big((BL)_{{kk}} + (LB)_{{kk}} \Big)\Big( tr(BL) + L_{{jj}} \Big) \Bigg|,
\end{align*}}%
where the second inequality follows from \eqref{inl}. By adding and subtracting $B_{jj}, L_{kk},$ we deduce further
{\small\begin{align*}
&\leq 5||L||^2\alpha+ \sum_{{j} \neq {k}}A_{{kk}}A_{{jj}} 5||L||^2\alpha_k \\&+ \sum_{{j} \neq {k}}A_{{kk}}A_{{jj}}\Bigg|[(B_{{kk}} - B_{{jj}})]L_{{kk}}tr(BL)\Bigg|\\&+   \Bigg|\sum_{{j} \neq {k}}(L_{kk} - L_{jj})A_{kk}A_{{jj}}\Big((BL)_{{kk}} + (LB)_{{kk}}\Big)  \Bigg| \\
 &\leq 12||L||^2 \alpha +   \Bigg|\sum_{{j} \neq {k}}(L_{kk} - L_{jj})A_{kk}A_{{jj}}\Big((BL)_{{kk}} + (LB)_{{kk}}\Big)  \Bigg|
\\&\leq  12||L||^2 \alpha\\
&+\Bigg|\sum_{k}\sum_{j \neq k}A_{jj}A_{kk}(L_{kk} - L_{jj})\sum_{r \neq j,k}\Big[B_{kr}L_{rk} + L_{kr}B_{rk}\Big]\Bigg|\\ &+ \Bigg|\sum_{k}\sum_{j \neq 
 k}(L_{kk} - L_{jj})A_{jj}A_{kk}\Big[B_{kk}L_{kk} + L_{kk}B_{kk}\Big]\Bigg|\\ &+ \Bigg|\sum_{{k}}\sum_{{j} \neq {k}}A_{{jj}}A_{{kk}}(L_{kk} - L_{jj})\Big[B_{{k}{j}}L_{{j}{k}} + L_{{k}{j}}B_{{j}{k}}\Big]\Bigg|\\
& \leq 12||L||^2\alpha+4||L||^2\alpha+2||L||^2\alpha,
 \end{align*}}%
where we apply Fubini and triangular inequality for the second and third terms and use the fact that the last term is equal to zero by symmetry. So the lemma is proved. 
\end{proof}


\section{Applications and Numerical illustration}
In the following section, we consider $N$-quantum particles undergoing imperfect quantum non-demolition measurements, where asymptotically the system converges to pure states which correspond to the equilibria of the system, this phenomenon is known as quantum state reduction, see e.g., \cite{bauer2013repeated,liang2019exponential,handel05red}. Here we  derive the MF Belavkin equation and consider its application in a feedback stabilizing such a system, as propagation of chaos is proved by Theorem \ref{thm:chaos} for perfect measurement under purification assumption, intuitively this motivates our study. This study mathematically is true if the propagation of chaos is valid for imperfect measurement.
Here we focus on measurement-based feedback control strategies, see e.g., \cite{mirrahimiHandel07,qibo10} for a mathematical model description.

\subsection{$N$-quantum particles}
We consider the case of $N$-qubit system (i.e $\mathcal{X} = \{1,2\}$), interacting through a Hamiltonian of MF type. Let the interaction operator between qubits be an operator describing the  exchange of photons \cite[Discussion.6
]{kolokoltsov2021qmfgcounting}, \cite[Remark 8]{kolokoltsov2022dynamic}, where $A = a_1^{\dag}a_2 + a_2^{\dag}a_1$. This operator represents the exchange of a single photon between two qubits, where $a_{j}^{\dag}$ and $a_{j}$ are the creation and annihilation  operators respectively for the $j$-th qubit. The first term describes the process where a photon is absorbed by the first qubit, while the second qubit emits a photon. The second term describes the opposite process. This interaction is given by the tensor $A(l, l'; k, k')$ such that $A(2, 1; 1, 2) = A(1, 2; 2, 1) = 1$ and zeros otherwise. For each particle we associate a free Hamiltonian $\mathbf{H}_j = \boldsymbol{\sigma_z}^{j}$, an  observation channel $\mathbf{L}_j = \boldsymbol{\sigma_z}^{j} $ and a controlled Hamiltonian $\mathbf{\hat{H}}_j = \boldsymbol{\sigma_x}^{j}$. The evolution of the $N$-particles  is given by the following equation:
\vspace{-6mm}

{\small\begin{align*}
\mathrm{d}\boldsymbol{\rho}_t^{N} =& -\iu [\mathbf{H}_t,\boldsymbol{\rho}_t^{N}]\mathrm{d}t + \sum_{j=1}^N \Big(\boldsymbol{\sigma_z}^{j}\boldsymbol{\rho}_t^{N}\boldsymbol{\sigma_z}^{j} - \boldsymbol{\rho}_t^{N}\Big)\mathrm{d}t\\
&+\sqrt{\eta}\sum_{j=1}^N\Big(\boldsymbol{\rho}_t^{N}\boldsymbol{\sigma_z}^{j}+\boldsymbol{\sigma_z}^{j}\boldsymbol{\rho}_t^{N} - 2tr\big(\boldsymbol{\sigma_z}^{j}\boldsymbol\rho_t^{N}\big)\boldsymbol\rho_t^{N}\Big)\mathrm{d}W_t^{j}.
\end{align*}}
\vspace{-4mm}

Note that the simulation of $\boldsymbol\rho^{N}$ requires  $4^N - 1$ real stochastic differential equations and the complexity is $O(4^N)$. 
 
\subsection{Feedback stabilization based on MF Belavkin equation}
Here we aim to make the feedback control depend on  the MF limit equation $\gamma$ instead of the original filter equation $\boldsymbol\rho^N$ and to control $\gamma$ in situation where the interaction between particles is of MF type, by doing this the complexity of the problem can be reduced notably as it is sufficient to  control only the MF particle toward a target state. 
Here we study numerically the asymptotic behavior for MF Belavkin equation.

For the MF equation, the free Hamiltonian will be $H = {\sigma_z}$, the observation channel is $L = {\sigma_z} $, and controlled Hamiltonian is $\hat{H} = {\sigma_x}$.
Straightforward calculations in Pauli basis give us 
$$\scriptsize A^{m} = \begin{pmatrix}
0 & \mathbb{E}[x] - \iu \mathbb{E}[y] \\ \mathbb{E}[x] + \iu \mathbb{E}[y] & 0 
\end{pmatrix}.$$
So the MF Belavkin equation projected in Pauli basis is represented as follows:

{\small\begin{align*}
\small\mathrm{d}x_t &= \small \Big( - y_t - x_t + z_t\mathbb{E}[y_t]\Big)\mathrm{d}t - \sqrt{\eta}x_tz_t\mathrm{d}W_t,\\
\small\mathrm{d}y_t &= \small\Big( x_t - {y_t} + u(\gamma_t)z_t -z_t\mathbb{E}[x_t]\Big)\mathrm{d}t + \sqrt{\eta}y_tz_t\mathrm{d}W_t,\\
\small\mathrm{d}z_t &= \small\Big(-u(\gamma_t)x_t + y_t\mathbb{E}[x_t] + x_t\mathbb{E}[y_t]\Big)\mathrm{d}t + \sqrt{\eta}\big(1 - z_t^2)\mathrm{d}W_t. 
\end{align*}}
To simulate the MF equation, we need to solve only three real stochastic differential equations. Nevertheless, we need to approximate $\mathbb{E}[x_t], \mathbb{E}[y_t], \mathbb{E}[z_t]$ using an $N$-particle system, which yields a complexity $O(N)$.
{\small\begin{align*}
\mathrm{d}x_t^{j} &= \small \Big( - y^{j}_t - x^{j}_t + z^{j}_t\frac{1}{N}\sum_{k=1}^{N}\delta_{y^{k}_t}\Big)\mathrm{d}t - \sqrt{\eta}x^j_tz^j_t\mathrm{d}W^{j}_t,\\
\mathrm{d}y^{j}_t &= \small\Big( x^{j}_t - {y^{j}_t} + u(\gamma_t^{j})z^{j}_t -z^{j}_t\frac{1}{N}\sum_{k=1}^{N}\delta_{x^{k}_t}\Big)\mathrm{d}t + \sqrt{\eta}y^{j}_tz^{j}_t\mathrm{d}W^{j}_t,\\
\mathrm{d}z^{j}_t &\!\!=\!\! \Big(-u(\gamma_t^j)x^{j}_t + \sum_{k=1}^{N}\!\Big(\frac{y^{j}_t}{N}\delta_{x^{k}_t} \!\!+ \!\!\frac{x^{j}_t}{N}\delta_{y^{k}_t}\Big)\Big)\mathrm{d}t \!\!+ \!\!\sqrt{\eta}(1 - z^{2}_t)\mathrm{d}W^{j}_t.
\end{align*}}%
Using Euler's discretization scheme, classical results on the propagation of chaos guarantees the convergence, see e.g.  \cite[Pages 129-130]{liu19phd}.

We start by studying the asymptotic behavior of our system when the feedback control is turned off, i.e., $(u\equiv  0).$ Through numerical simulations, we observe a quantum state reduction property, i.e $(\gamma_t)_{t \geq 0}$  converges to one of the eigenstates of $L,$ i.e., $ \{\rho_e,\rho_g\}$ with \begin{align*}\rho_g := \begin{pmatrix}1 & 0 \\ 0 & 0\end{pmatrix}, \; \rho_e := \begin{pmatrix}0 & 0 \\ 0 & 1\end{pmatrix},\end{align*} that are the equilibrium points of the MF  equation (see Fig. \ref{fig:QSRf}). Further, to ensure that the
system attains a prescribed target, for example $\rho_e$, we adapt a feedback law proposed in \cite{liang2018exponential}. Our  feedback control $u$ is given by  $u(\gamma) := -7.6\iu tr\big([\sigma_x,\gamma]\rho_e\big) + 5\big( 1 - tr(\gamma\rho_e)\big).$ Numerical illustration shows that the stabilization is achieved (see Fig. \ref{fig:Stabilization}).

   \begin{figure}[h!]
     \centering
  \includegraphics[width=0.6\linewidth]{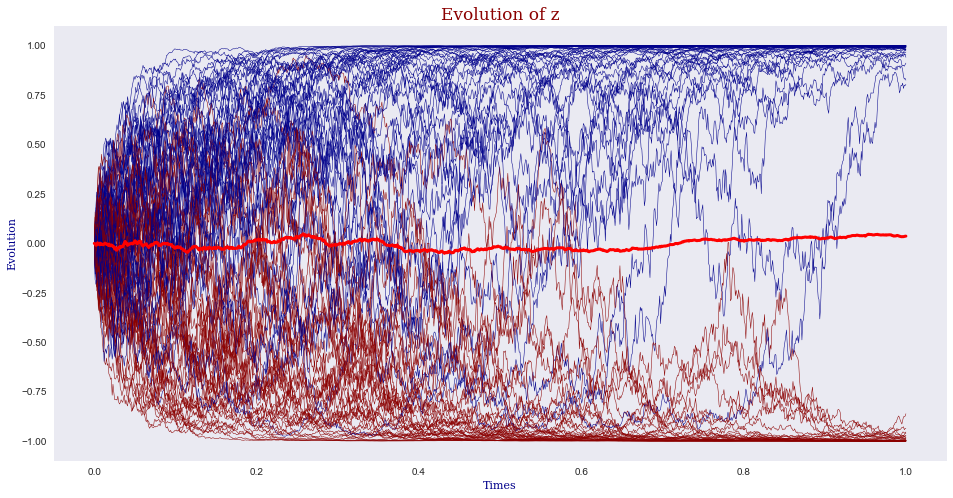}
  \caption{\footnotesize{Evolution of $z$-component for the MF equation. The red curve represents mean trajectories for arbitrary $100$ samples.}}
    \label{fig:QSRf}
  \end{figure}

  \begin{figure}[h!]
    \centering
  \includegraphics[width=0.6\linewidth]{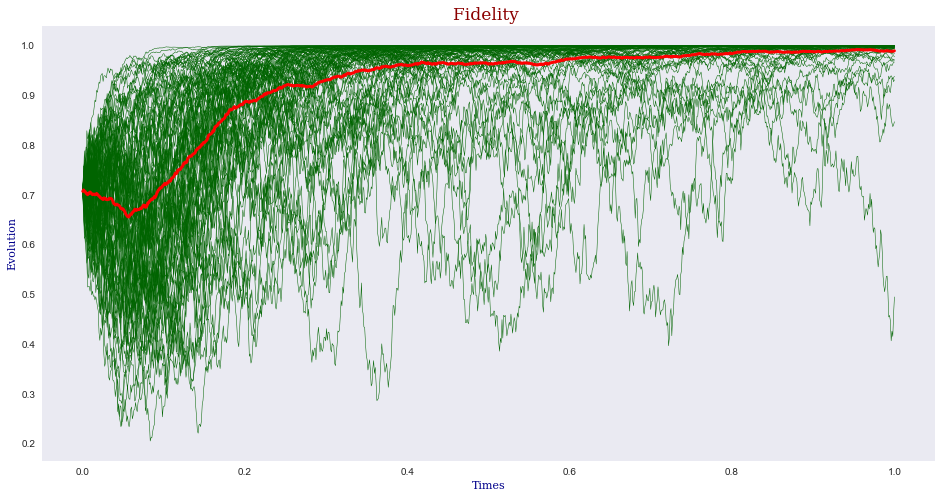}
  \caption{{\footnotesize{Convergence of the fidelity $\mathfrak{F}(\gamma,\rho_e) := \Big(tr\sqrt{\sqrt{\gamma}\rho_e\sqrt{\gamma}} \Big)^2$ toward one with initial state $(x_0,y_0,z_0) = (1/4,-1/4,0) $ and $\eta = 1$. The red curve represents the mean value of 100 arbitrary samples.}}}
  \label{fig:Stabilization}
  \end{figure}

\section{CONCLUSIONS}
In this letter we have considered the works established in
[1], [5]. We show how this framework can be extended to the
case of imperfect measurements. We provide more complete
proof for the well-posedness of the MF equation. We generalize a lemma to estimate the deviation between the quantum
system and its MF approximation, that shows  propagation of
chaos under purification assumption and perfect measurement. Numerical simulations suggest quantum state reduction and
stabilization by applying feedback control toward a target
state. This approximation in MF allows to study control and
stabilization of a system of $N$  continuous monitoring
interacting particles.

In further research we will provide rigorous proof for the
long-time behavior and stabilization for the MF equation. Also the link between such the study and quantum trajectory formalism is interesting to be investigated, as it is already established for Belavkin filtering equation, see e.g., \cite{barchielli1995constructing}. 

\bibliographystyle{unsrt}
\bibliography{ref}

\begin{thebibliography}{10}

\bibitem{kolokoltsov2022qmfg}
V.~N. Kolokoltsov.
\newblock Quantum mean-field games.
\newblock {\em The Annals of Applied Probability}, 32(3):2254--2288, 2022.

\bibitem{huang2006large}
M.~Huang, R.~P. Malham{\'e}, and P.~E. Caines.
\newblock Large population stochastic dynamic games: closed-loop
  {M}ckean-{V}lasov systems and the {N}ash certainty equivalence principle.
\newblock {\em Communications in Information \& Systems}, 6(3):221--252, 2006.

\bibitem{lasry2006mfg1}
J-M. Lasry and P-L. Lions.
\newblock Jeux {\`a} champ moyen. i--le cas stationnaire.
\newblock {\em Comptes Rendus Math{\'e}matique}, 343(9):619--625, 2006.

\bibitem{sudarshan77}
B.~{Misra} and E.~C.~G. {Sudarshan}.
\newblock {The Zeno's paradox in quantum theory}.
\newblock {\em Journal of Mathematical Physics}, 18(4):756--763, April 1977.

\bibitem{kolokoltsov2021law}
V.~N. Kolokoltsov.
\newblock The law of large numbers for quantum stochastic filtering and control
  of many-particle systems.
\newblock {\em Theoretical and Mathematical Physics}, 208(1):937--957, 2021.

\bibitem{kolokoltsov2022dynamic}
V.~N. Kolokoltsov.
\newblock Dynamic quantum games.
\newblock {\em Dynamic Games and Applications}, 12(2):552--573, 2022.

\bibitem{kolokoltsov2021qmfgcounting}
V.~N. Kolokoltsov.
\newblock Quantum mean-field games with theobservations of counting type.
\newblock {\em Games}, 12(1):7, 2021.

\bibitem{pickl11simple}
P.~Pickl.
\newblock A simple derivation of mean field limits for quantum systems.
\newblock {\em Letters in Mathematical Physics}, 97(2):151--164, 2011.

\bibitem{serafini12feedback}
A.~Serafini.
\newblock Feedback control in quantum optics: An overview of experimental
  breakthroughs and areas of application.
\newblock {\em International Scholarly Research Notices}, 2012, 2012.

\bibitem{gough13}
J.~Gough and V.~Belavkin.
\newblock Quantum control and information processing.
\newblock {\em Quantum Information Processing}, 12:1397--1415, 2013.

\bibitem{handel05rev}
R.~Van~Handel, J.K. Stockton, and H.~Mabuchi.
\newblock Modelling and feedback control design for quantum state preparation.
\newblock {\em Journal of Optics B: Quantum and Semiclassical Optics},
  7(10):S179, September 2005.

\bibitem{wiseman2009quantum}
H.~M. Wiseman and G.~J. Milburn.
\newblock {\em Quantum measurement and control}.
\newblock Cambridge university press, 2009.

\bibitem{boutenhandel07}
L.~Bouten, R.~Van~Handel, and M.~R. James.
\newblock An introduction to quantum filtering.
\newblock {\em SIAM J. Control Optim.}, 46(6):2199–2241, dec 2007.

\bibitem{gough22}
J.~Gough.
\newblock Quantum covariance and filtering.
\newblock {\em Annual Reviews in Control}, 54:262--273, 2022.

\bibitem{ohki18}
K.~Ohki.
\newblock An invitation to quantum filtering and smoothing theory based on two
  inner products.
\newblock {\em RIMS Kôkyûroku published}, pages 18--44, 2018.

\bibitem{belavkin01QuantumNB}
V.~Belavkin.
\newblock Quantum noise, bits and jumps: uncertainties, decoherence,
  measurements and filtering.
\newblock {\em Progress in Quantum Electronics}, 25(1):1--53, 2001.

\bibitem{maassen2006purification}
H.~Maassen and B.~K{\"u}mmerer.
\newblock Purification of quantum trajectories.
\newblock {\em Lecture Notes-Monograph Series}, pages 252--261, 2006.

\bibitem{handel05red}
R.~van Handel, J.K. Stockton, and H.~Mabuchi.
\newblock Feedback control of quantum state reduction.
\newblock {\em IEEE Transactions on Automatic Control}, 50(6):768--780, 2005.

\bibitem{bauer2013repeated}
M.~Bauer, T.~Benoist, and D.~Bernard.
\newblock Repeated quantum non-demolition measurements: convergence and
  continuous time limit.
\newblock In {\em Annales Henri Poincar{\'e}}, volume~14, pages 639--679.
  Springer, 2013.

\bibitem{bauer2011convergence}
M.~Bauer and D.~Bernard.
\newblock Convergence of repeated quantum nondemolition measurements and
  wave-function collapse.
\newblock {\em Physical Review A}, 84(4):044103, 2011.

\bibitem{liang2019exponential}
W.~Liang, N.~H. Amini, and P.~Mason.
\newblock On exponential stabilization of n-level quantum angular momentum
  systems.
\newblock {\em SIAM Journal on Control and Optimization}, 57(6):3939--3960,
  2019.

\bibitem{liang2018exponential}
W.~Liang, N.~H. Amini, and P.~Mason.
\newblock On exponential stabilization of spin-1/2 systems.
\newblock In {\em 2018 IEEE Conference on Decision and Control (CDC)}, pages
  6602--6607. IEEE, 2018.

\bibitem{belavkin92}
V.~Belavkin and P.~Staszewski.
\newblock Nondemolition observation of a free quantum particle.
\newblock {\em Physical Review A}, 1992.

\bibitem{mirrahimiHandel07}
M.~Mirrahimi and R.~Van~Handel.
\newblock Stabilizing feedback controls for quantum systems.
\newblock {\em {SIAM} J. Control. Optim.}, 46(2):445--467, 2007.

\bibitem{lewin2014derivation}
M.~Lewin, P.~T. Nam, and N.~Rougerie.
\newblock Derivation of {H}artree's theory for generic mean-field bose systems.
\newblock {\em Advances in Mathematics}, 254:570--621, 2014.

\bibitem{merkil2012}
M.~Merkli and G.~Berman.
\newblock Mean-field evolution of open quantum systems : an exactly solvable
  model.
\newblock {\em Proceedings of the Royal Society A: Mathematical, Physical and
  Engineering Sciences}, 2012.

\bibitem{carmona2018mfg1}
R.~Carmona and F.~Delarue.
\newblock {\em Probabilistic Theory of Mean Field Games with Applications I
  Mean Field FBSDEs, Control, and Games}.
\newblock Probability Theory and Stochastic Modelling. Springer Nature, United
  States, 2018.

\bibitem{qibo10}
B.~Qi and L.~Guo.
\newblock Is measurement-based feedback still better for quantum control
  systems.
\newblock {\em Systems \& Control Letters}, 2010.

\bibitem{liu19phd}
Y.~Liu.
\newblock {\em Optimal Quantization: Limit Theorem, Clustering and Simulation
  of the McKean-Vlasov Equation}.
\newblock PhD thesis, Sorbonne universit{\'e}, 2019.

\bibitem{barchielli1995constructing}
A.~Barchielli and A.S. Holevo.
\newblock Constructing quantum measurement processes via classical stochastic
  calculus.
\newblock {\em Stochastic Processes and their applications}, 58(2):293--317,
  1995.

\end{thebibliography}

\end{document}